\newtheorem{thm}{Theorem}
\newtheorem{lem}{Lemma}
\theoremstyle{definition}
\numberwithin{equation}{section}
\newcommand{\bd}{{\rm bd}}
\newcommand{\inter}{{\rm int}}
\newcommand{\diam}{{\rm diam}}
\newcommand{\width}{\rm width}
\begin{document}

\baselineskip=16pt

\title[COMPLETE SPHERICAL CONVEX BODIES]{COMPLETE SPHERICAL CONVEX BODIES}

\author{Marek Lassak}

\date{}
 
\maketitle

\thispagestyle{empty}

\begin{abstract}
Similarly to the classic notion in $E^d$, a subset of a positive diameter below $\frac{\pi}{2}$ of a hemisphere of the sphere $S^d$ is called complete, provided adding any extra point increases its diameter.
Complete sets are convex bodies on $S^d$.
Our main theorem says that on $S^d$ complete bodies of diameter $\delta$ coincide with bodies of constant width $\delta$.
\end{abstract}

\maketitle

\section {On spherical geometry}
\label{spherical}

Let $S^d$ be the unit sphere in the $(d+1)$-dimensional Euclidean space $E^{d+1}$, where $d\geq 2$.
By a {\it great circle} of $S^d$ we mean the intersection of $S^d$ with any two-dimensional subspace of $E^{d+1}$.
The common part of the sphere $S^d$ with any hyper-subspace of $E^{d+1}$ is called a {\it $(d-1)$-dimensional great sphere} of $S^d$.
By a pair of {\it antipodes} of $S^d$ we mean any pair of points of intersection of $S^d$ with a straight line through the origin of $E^{d+1}$.

Clearly, if two different points $a, b \in S^d$ are not antipodes, there is exactly one great circle containing them.
By the {\it arc} $ab$ connecting $a$ with $b$ we mean the shorter part of the great circle containing $a$ and $b$.
By the {\it spherical distance} $|ab|$, or shortly {\it distance}, of these points we mean the length of the arc connecting them.
The {\it diameter} $\diam(A)$ of a set $A \subset S^d$ is the number ${\rm sup}_{a, b \in A} |ab|$.
By a {\it spherical ball $B_{\rho}(r)$ of radius $\rho \in (0, {\pi \over 2}]$}, or shorter {\it a ball}, we mean the set of points of $S^d$ having distance at most $\rho$ from a fixed point, called the {\it center} of this ball.
Spherical balls of radius $\pi \over 2$ are called {\it hemispheres}.
Two hemispheres whose centers are antipodes are called {\it opposite hemispheres}.

We say that a subset of $S^d$ is {\it convex} if it does not contain any pair of antipodes and if together with every two points $a, b$ it contains the arc $ab$.
By a {\it convex body}, or shortly {\it body}, on $S^d$ we mean any closed convex set with non-empty interior.

Recall a few notions from \cite{L1-AEQ}.
If for a hemisphere $H$ containing a convex body $C \subset S^d$ we have $\bd(H) \cap C \not = \emptyset$, then we say that $H$ {\it supports} $C$.
If hemispheres $G$ and $H$ of $S^d$ are different and not opposite, then $L = G \cap H$ is called {\it a lune} of $S^d$.
The $(d-1)$-dimensional hemispheres bounding the lune $L$ and contained in $G$ and $H$, respectively, are denoted by $G/H$  and $H/G$.
We define the {\it thickness of a lune} $L = G \cap H$ as the spherical distance of the centers of $G/H$ and $H/G$.
For a hemisphere $H$ supporting a convex body $C \subset S^d$ we define the {\it width $\width_H(C)$ of $C$ determined by} $H$ as the minimum thickness of a lune of the form $H \cap H'$, where $H'$ is a hemisphere, containing $C$.
If for all hemispheres $H$ supporting $C$ we have $\width_H(C) = w$, we say that $C$ is {\it of constant width} $w$.

\section {Spherical complete bodies}
\label{lemmas}

Similarly to the traditional notion of a complete set in the Euclidean space $E^d$ (for instance, see \cite{BF}, \cite{CS}, \cite{Eg} and \cite{MMO}) we say that a subset $K$ of diameter $\delta \in (0, \frac{\pi}{2})$
of a hemisphere of $S^d$ is {\it complete} provided  $\diam (K \cup \{x\}) > \delta$ for every $x \not \in K.$ [This definition corrects the definition given in the version v1 and in the version published on-line in Journal of Geometry on July 4, 2020.]

\begin{thm}
An arbitrary subset of diameter $\delta \in (0, \frac{\pi}{2})$ of a hemisphere of $S^d$ is a subset of a complete set of diameter $\delta$. 
\end{thm}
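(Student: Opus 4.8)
The plan is to mimic the classical Euclidean argument via Zorn's lemma. Consider the family $\mathcal{F}$ of all subsets $L$ of the given hemisphere such that $K \subseteq L$ and $\diam(L) = \delta$, partially ordered by inclusion. This family is nonempty since $K \in \mathcal{F}$. To apply Zorn's lemma I must check that every chain $\mathcal{C} \subseteq \mathcal{F}$ has an upper bound in $\mathcal{F}$; the natural candidate is $L^* = \bigcup_{L \in \mathcal{C}} L$. Clearly $K \subseteq L^*$, so the one nontrivial point is that $\diam(L^*) = \delta$. Given any two points $a, b \in L^*$, they lie in $L_1$ and $L_2$ respectively for some $L_1, L_2 \in \mathcal{C}$; since $\mathcal{C}$ is a chain, one of these contains the other, say $L_1 \subseteq L_2$, whence $a, b \in L_2$ and $|ab| \le \diam(L_2) = \delta$. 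Taking the supremum over all such pairs gives $\diam(L^*) \le \delta$, and since $K \subseteq L^*$ the reverse inequality is immediate, so $\diam(L^*) = \delta$ and $L^* \in \mathcal{F}$.

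Next I would invoke Zorn's lemma to obtain a maximal element $\widehat{K} \in \mathcal{F}$. I claim $\widehat{K}$ is the desired complete set. By construction $K \subseteq \widehat{K}$ and $\diam(\widehat{K}) = \delta$, so it remains to verify completeness: for every $x \notin \widehat{K}$ we need $\diam(\widehat{K} \cup \{x\}) > \delta$. If instead $\diam(\widehat{K} \cup \{x\}) \le \delta$, then since $\diam(\widehat{K} \cup \{x\}) \ge \diam(\widehat{K}) = \delta$ we would have $\diam(\widehat{K} \cup \{x\}) = \delta$, and $\widehat{K} \cup \{x\}$ would lie in the hemisphere (a point $x$ at distance at most $\delta < \frac{\pi}{2}$ from points of $\widehat{K}$ — this needs a short check that it still fits inside some hemisphere, see below), contradicting the maximality of $\widehat{K}$ in $\mathcal{F}$. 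Hence $\widehat{K}$ is complete of diameter $\delta$.

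The step I expect to require the most care is the requirement, built into the definition of completeness, that the complete superset still be ``a subset of a hemisphere.'' The family $\mathcal{F}$ as I defined it already restricts to subsets of the one fixed hemisphere $H$ containing $K$, so the Zorn argument produces $\widehat{K} \subseteq H$ automatically and the maximality contradiction in the previous paragraph is with respect to subsets of $H$: any $x \in H \setminus \widehat{K}$ with $\diam(\widehat{K}\cup\{x\}) \le \delta$ violates maximality, and any $x \notin H$ is excluded from consideration by the definition of complete being relative to $H$. One should double-check that this reading is the intended one — i.e. that ``complete'' means ``not properly extendable within some hemisphere'' — which is consistent with $\delta < \frac{\pi}{2}$ being imposed precisely so that bounded-diameter sets sit inside hemispheres (a set of diameter below $\frac{\pi}{2}$ is contained in a ball of radius below $\frac{\pi}{2}$, hence in a hemisphere). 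Finally, although the theorem as stated only asserts existence of a complete superset, one may additionally remark that $\widehat{K}$ is convex: this follows because adding to $\widehat{K}$ any point of an arc $ab$ with $a,b \in \widehat{K}$ does not increase the diameter (every point of arc $ab$ is within distance $\delta$ of every point of $\widehat{K}$, by a spherical convexity-of-distance argument), so by completeness such points already lie in $\widehat{K}$; thus $\widehat{K}$ is a convex body, matching the abstract's assertion that complete sets are convex bodies.
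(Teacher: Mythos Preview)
Your Zorn's-lemma approach is correct in spirit and is essentially what the paper intends: the paper omits the proof entirely and simply cites Lebesgue's classical $E^d$ argument (as recounted in Bonnesen--Fenchel), which is precisely this transfinite-extension idea.

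One point does need tightening. The paper's definition of \emph{complete} requires $\diam(K\cup\{x\})>\delta$ for every $x\in S^d\setminus K$, not merely for $x$ in a fixed hemisphere; your claim that points $x\notin H$ are ``excluded from consideration by the definition'' is a misreading. Restricting $\mathcal{F}$ to subsets of one pre-chosen hemisphere $H$ can therefore produce a Zorn-maximal $\widehat{K}$ that is not complete: if $K$ lies near $\bd(H)$, a point just outside $H$ may still be within $\delta$ of every point of $\widehat{K}$, and maximality in $\mathcal{F}$ says nothing about it. The clean fix is the one you almost write down yourself: drop the restriction to $H$ and let $\mathcal{F}$ consist of all subsets of $S^d$ containing $K$ of diameter $\delta$. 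The chain argument is unchanged, the maximal $\widehat{K}$ is then complete over all of $S^d$, and since $\diam(\widehat{K})=\delta<\frac{\pi}{2}$ we have $\widehat{K}\subset B_\delta(y_0)$ for any $y_0\in\widehat{K}$, hence $\widehat{K}$ lies in a hemisphere as the definition demands. (Equivalently, keep the restriction but take $H$ to be the hemisphere centered at some fixed $y_0\in K$; then any $x$ with $|xy_0|\le\delta$ already lies in $H$, and the issue evaporates.) With this adjustment your argument is complete; your closing remark on convexity is correct and is exactly the content of the paper's Lemma~\ref{intersection}.
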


We omit the proof since it is similar to the proof by Lebesgue \cite{Le} in $E^d$ (it is recalled in Part 64 of \cite{BF}).
Let us add that earlier P\'al \cite{Pa} proved this for $E^2$ by a different method.

The following fact permits to use the term  a {\it complete convex body} for a complete set.

\begin{lem}\label{intersection}
Let $K \subset S^d$ be a complete set of diameter $\delta$.
Then $K$ coincides with the intersection of all balls of radius $\delta$ centered at points of $K$.
Moreover, $K$ is a convex body.
\end{lem}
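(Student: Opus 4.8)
The plan is to treat the two assertions separately. The equality of $K$ with the intersection of balls, call it $\mathcal{B}=\bigcap_{p\in K}B_\delta(p)$ (each $B_\delta(p)$ is a ball, as $0<\delta<\frac{\pi}{2}$), is almost immediate: one inclusion is just the definition of diameter (for $p,q\in K$ one has $|pq|\le\diam(K)=\delta$, so $q\in B_\delta(p)$, giving $K\subseteq\mathcal{B}$), and for the reverse inclusion I would invoke completeness directly — if some $x\in\mathcal{B}$ were not in $K$, then $|xp|\le\delta$ for every $p\in K$, so $\diam(K\cup\{x\})=\delta$, contradicting completeness. Hence $K=\mathcal{B}$.

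Convexity and closedness of $K$ then come for free. A ball of radius $\delta<\frac{\pi}{2}$ contains no pair of antipodes (two of its points lie within distance $2\delta<\pi$ of each other) and is geodesically convex, so it is a convex body; an intersection of such balls is closed, is free of antipodes (it lies inside any one of the balls), and contains the arc joining any two of its points, so $K=\mathcal{B}$ is a closed convex set. Being closed in $S^d$ it is also compact, so $\diam(K)=\delta$ is attained, say by $a,b\in K$.

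The only genuinely substantive point, and the one I expect to be the main obstacle, is showing $\inter(K)\ne\emptyset$. I would argue by contradiction. If $\inter(K)=\emptyset$, then, being a compact antipode-free convex subset of $S^d$, $K$ lies in some $(d-1)$-dimensional great sphere $S$ — a standard fact on spherical convexity (via the gnomonic projection it reduces to the statement that a compact convex subset of $E^d$ with empty interior lies in a hyperplane); see \cite{L1-AEQ}. Let $c\in S$ be the center and $r$ the radius of the smallest ball of $S$ containing $K$. Since $K\subseteq B_\delta(a)$ for $a\in K$, we get $r\le\delta<\frac{\pi}{2}$, and in fact $r<\delta$: minimality forces $c$ into the spherical convex hull of $\{p\in K:|cp|=r\}$, and passing to the tangent space $c^{\perp}$ in $E^{d+1}$ (where the relevant projected vectors all have length $\sin r$ and a nonnegative combination of them vanishes) one finds two such points $u,v$ with $\langle u-(\cos r)c,\ v-(\cos r)c\rangle\le0$, that is $\cos|uv|\le\cos^{2}r<\cos r$, so $\delta\ge|uv|>r$. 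Now choose $\epsilon\in(0,\frac{\pi}{2})$ with $\cos\epsilon\cos r>\cos\delta$ and let $x$ be a point at distance $\epsilon$ from $c$ on the great circle through $c$ orthogonal to $S$; then $x\notin S\supseteq K$. For every $p\in K\subseteq S$ the arcs $cx$ and $cp$ meet orthogonally at $c$, so the spherical law of cosines gives $\cos|xp|=\cos\epsilon\,\cos|cp|\ge\cos\epsilon\cos r>\cos\delta$, i.e. $|xp|<\delta$. Thus $\diam(K\cup\{x\})=\delta$ with $x\notin K$, contradicting completeness; hence $\inter(K)\ne\emptyset$ and $K$ is a convex body.

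Two places need care. The first is the implication ``empty interior $\Rightarrow$ contained in a $(d-1)$-dimensional great sphere,'' which I would justify via the gnomonic projection or take from \cite{L1-AEQ}. The second is the strict inequality $r<\delta$: the value of the Jung-type constant is irrelevant, but the strictness must be secured, since if $r=\delta$ were possible then a point $p\in K$ with $|cp|=\delta$ would yield $|xp|>\delta$ and the whole argument would collapse. Everything else is routine manipulation with the spherical law of cosines.
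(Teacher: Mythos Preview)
Your argument for $K=\mathcal{B}$ is exactly the paper's: one inclusion from $\diam(K)=\delta$, the other by contraposition from completeness. The paper then writes a single sentence --- ``As an intersection of balls centered at points of $K$, our $K$ is a convex body'' --- and stops; convexity and closedness are indeed immediate, but non-emptiness of the interior is simply asserted, not proved.

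What you add is a genuine argument for $\inter(K)\ne\emptyset$, and it is correct. The reduction ``empty interior $\Rightarrow$ $K$ lies in a great $(d-1)$-sphere'' is standard via gnomonic projection, as you say. Your Jung-type step is also sound: minimality of the circumscribed ball in $S$ forces $0$ to lie in the convex hull of the tangent vectors $w_p=p-(\cos r)c$ for contact points $p$, and then the quadratic-form argument yields two contact points $u,v$ with $\langle w_u,w_v\rangle\le 0$, hence $\cos|uv|\le\cos^2 r<\cos r$, giving the crucial strict inequality $r<\delta$. The final perpendicular-displacement step with the spherical law of cosines is routine and correct. So your two flagged worries are both handled.

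In short: same proof for the main identity; you go beyond the paper by actually establishing that $K$ is a \emph{body}, not merely closed and convex. The paper buys brevity at the cost of leaving that point unjustified; your version is longer but complete.
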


\begin{proof}
Denote by $I$ the intersection of all balls of radius $\delta$ with centers in $K$.

Since $\diam (K)  = \delta$, then $K$ is contained in every ball of radius $\delta$ whose center is a point of $K$.
Consequently,  $K \subset I$.

Let us show that $I \subset K$; so let us show that $x \not \in K$ implies $x \not \in I$.
Really, from $x \not \in K$ we get $|xy| > \delta$ for a point $y \in K$, which means that $x$ is not in the ball of radius $\delta$ with center $y$, and thus $x \not \in I$.

As an intersection of balls centered at points of $K$, our $K$ is a convex body.
\end{proof}

\begin{lem}\label{p'}
If $K \subset S^d$ is a complete body of diameter $\delta$, then for every $p \in \bd (K)$ there exists $p' \in K$ such that $|pp'|=\delta$.
\end{lem}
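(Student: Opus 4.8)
The plan is to argue by contradiction. Suppose that some $p \in \bd(K)$ satisfies $|pq| < \delta$ for every $q \in K$. By Lemma \ref{intersection} the set $K$ is a convex body, hence compact, so the continuous function $q \mapsto |pq|$ attains its maximum on $K$; denote this maximum by $\delta'$. Since $p \in K$ (recall $K$ is closed) and the maximum is attained at some point of $K$, the assumption gives $\delta' < \delta$.

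Next I would exploit that $p$ is a boundary point of $K$: every neighbourhood of $p$ meets the complement of $K$, so there is a point $x \notin K$ with $|px| < \delta - \delta'$. For an arbitrary $q \in K$ the spherical triangle inequality (legitimate here because $K$ lies in a hemisphere and $\delta < \frac{\pi}{2}$, so all the arcs involved are well below $\pi$) yields
\[
|xq| \le |xp| + |pq| < (\delta - \delta') + \delta' = \delta .
\]
Thus $|xq| < \delta$ for every $q \in K$.

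It then remains to estimate the diameter of $K \cup \{x\}$. Since $K \subseteq K \cup \{x\}$ we have $\diam(K\cup\{x\}) \ge \delta$; on the other hand $\diam(K \cup \{x\}) = \max\{\diam(K),\, \sup_{q \in K}|xq|\}$, and both quantities on the right are at most $\delta$. Hence $\diam(K \cup \{x\}) = \delta$, which contradicts the completeness of $K$, since $x \notin K$. This contradiction shows that for every $p \in \bd(K)$ there is a point $p' \in K$ with $|pp'| = \delta$; the value $\delta$ is genuinely attained (not merely approached) because $K$ is compact.

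As for difficulties, there is essentially no deep obstacle. The only points requiring a little care are the validity of the triangle inequality in the spherical setting, which is fine in the stated range, and the observation that any pair realizing $\diam(K\cup\{x\})$ either lies in $K$ or consists of $x$ together with a point of $K$, so that bounding $|xq|$ uniformly over $q\in K$ is enough to bound $\diam(K\cup\{x\})$.
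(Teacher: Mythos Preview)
Your argument is correct and is essentially the same as the paper's: both proceed by contradiction, use compactness of $K$ to obtain a uniform gap $\delta-\delta'$ (the paper calls it $\varepsilon$), pick a point $x\notin K$ within that gap of $p$, and invoke the triangle inequality to conclude $\diam(K\cup\{x\})=\delta$, contradicting completeness. Your write-up is simply more explicit about the triangle inequality and the diameter computation, whereas the paper leaves these implicit.
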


\begin{proof}
Suppose the contrary, i.e., that $|pq| < \delta$ for a point $p \in \bd (K)$ and for every point $q \in K$.
Since $K$ is compact, there is an $\varepsilon > 0$ such that  $|pq| \leq \delta - \varepsilon$ for every $q \in K$.
Hence there is a point $s \not \in K$ in a positive distance from $p$ which is smaller than $\varepsilon$ such that $|sq| \leq \delta$ for every $q \in K$.
Thus $\diam (K \cup \{s\}) = \delta$, which contradicts the assumption that $K$ is complete.
Consequently, the thesis of our lemma holds true.
\end{proof}

For different points $a, b \in S^d$ at a distance $\delta < \pi$ from a point $c \in S^d$ define the {\it piece of circle} $P_c(a,b)$ as the set of points $v \in S^d$ such that $cv$ has length $\delta$ and intersects $ab$.

We show the next lemma for $S^d$ despite we apply it later
only for $S^2$.

\begin{lem}\label{circle}
Let $K \subset S^d$ be a complete convex body of diameter $\delta$.
Take $P_c(a,b)$ with $|ac|$ and $|bc|$ equal to $\delta$ such that $a, b \in K$ and $c \in S^d$.
Then $P_c(a,b) \subset K$.
\end{lem}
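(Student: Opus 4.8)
The plan is to deduce the containment from Lemma \ref{intersection}, which identifies $K$ with the intersection of all balls of radius $\delta$ centred at points of $K$. Consequently it suffices to prove that $|qv|\le\delta$ for every $q\in K$ and every $v\in P_c(a,b)$: then $v$ lies in every ball $B_\delta(q)$ with $q\in K$, hence in $K$, and as $v\in P_c(a,b)$ is arbitrary we get $P_c(a,b)\subset K$. So I fix such $q$ and $v$ and work for the rest in the ambient space, viewing points of $S^d$ as unit vectors in $E^{d+1}$.

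Because $|ca|=|cb|=\delta$, the vectors $u_a:=\tfrac1{\sin\delta}\bigl(a-(\cos\delta)c\bigr)$ and $u_b:=\tfrac1{\sin\delta}\bigl(b-(\cos\delta)c\bigr)$ are unit vectors orthogonal to $c$, and $a=(\cos\delta)c+(\sin\delta)u_a$, $b=(\cos\delta)c+(\sin\delta)u_b$. The first genuine step is to describe $P_c(a,b)$ in these terms: a point $v$ lies in $P_c(a,b)$ precisely when $v=(\cos\delta)c+(\sin\delta)u$ with $u=\dfrac{\alpha u_a+\beta u_b}{|\alpha u_a+\beta u_b|}$ for some $\alpha,\beta\ge 0$ not both zero, the crossing point of the arc $cv$ with the arc $ab$ being the normalization of $\alpha a+\beta b$. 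I expect this translation — matching the components along $c$ and along $c^{\perp}$, and checking that the relevant arc parameters stay in $[0,\delta]$, which in particular forces $\alpha u_a+\beta u_b\ne 0$ because $|ab|\le\delta$ (as $a,b\in K$) — to be the one delicate part of the write-up, even though each individual step is elementary.

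Granting that description, the rest is a short computation in the spirit of the classical Euclidean argument. From $a,q\in K$ and $\diam(K)=\delta$ we have $\langle a,q\rangle\ge\cos\delta$, which rearranges to $(\sin\delta)\langle u_a,q\rangle\ge\kappa$ with $\kappa:=(\cos\delta)(1-\langle c,q\rangle)$, and likewise $(\sin\delta)\langle u_b,q\rangle\ge\kappa$. The key observation is that $\kappa\ge 0$, since $\langle c,q\rangle=\cos|cq|\le 1$ and $\cos\delta>0$. Then, using $\alpha,\beta\ge 0$, the triangle inequality $|\alpha u_a+\beta u_b|\le\alpha+\beta$, and $\kappa\ge 0$,
\[
(\sin\delta)\langle u,q\rangle=\frac{\sin\delta}{|\alpha u_a+\beta u_b|}\bigl(\alpha\langle u_a,q\rangle+\beta\langle u_b,q\rangle\bigr)\ \ge\ \frac{\alpha+\beta}{|\alpha u_a+\beta u_b|}\,\kappa\ \ge\ \kappa .
\]
Adding $(\cos\delta)\langle c,q\rangle$ to both sides gives $\langle v,q\rangle\ge(\cos\delta)\langle c,q\rangle+\kappa=\cos\delta$, i.e. $|vq|\le\delta$, which is what was needed. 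The only obstacle is the parametrization of $P_c(a,b)$ above; note that the sign $\kappa\ge 0$ is exactly what makes the triangle-inequality estimate point the right way, so I would be careful to record it.
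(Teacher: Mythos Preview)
Your argument is correct, and it takes a genuinely different route from the paper's. Both proofs invoke Lemma~\ref{intersection} to reduce the claim to showing $P_c(a,b)\subset B_\delta(q)$ for each $q\in K$, but the paper handles this containment geometrically: it passes to the (essentially unique) $2$-sphere through $a,b,c$, intersects the ball with that $S^2$ to get a disk $D$, and argues that the piece of circle $P_c(a,b)$ extends to a larger piece $P_{c^*}(a^*,b^*)$ with endpoints on $\partial D$ that is contained in $D$. You instead linearise the problem in $E^{d+1}$: after writing $v=(\cos\delta)c+(\sin\delta)u$ with $u$ a normalized nonnegative combination of $u_a,u_b$, the single estimate $(\sin\delta)\langle u,q\rangle\ge\kappa$ follows from the triangle inequality together with the crucial sign $\kappa=(\cos\delta)(1-\langle c,q\rangle)\ge 0$ (this is exactly where $\delta<\frac{\pi}{2}$ enters), and adding back the $c$-component gives $\langle v,q\rangle\ge\cos\delta$. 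Your approach avoids the dimension reduction to $S^2$ and the somewhat informal circle-intersection picture in the paper (where, incidentally, ``great circle containing $P_c(a,b)$'' should really be a small circle, and the inclusion $P_{c^*}(a^*,b^*)\subset D$ is asserted rather than justified); the price is the parametrisation of $P_c(a,b)$ that you rightly flag as needing care, but your sketch of it---including the observation that $|ab|\le\delta$ rules out $\alpha u_a+\beta u_b=0$---is sound.
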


\begin{proof}
First let us show the thesis for a ball $B$ of radius $\delta$ in place of $K$.
There is unique $S^2 \subset S^d$ with $a, b, c \in S^2$.
Consider the disk $D= B \cap S^2$.
Take the great circle containing $P_c(a,b)$ and points $a^*, b^*$ of its intersection with the circle bounding $D$.
There is a unique $c^* \in S^2$ such that $P_c(a,b) \subset P_{c^*}(a^*,b^*)$.
Clearly, $P_{c^*}(a^*,b^*) \subset D \subset B$.
Hence $P_c(a,b) \subset B.$

By the preceding paragraph and Lemma \ref{intersection} we obtain the thesis of the present lemma.
\end{proof}

\section{Complete and constant width bodies on $S^d$ coincide}
\label{coincide}

Here is our main result presenting
the spherical version of the classic theorem in $E^d$ proved by Meissner \cite {Me} for $d=2,3$ and by Jessen \cite{Je}
for arbitrary $d$.

\begin{thm}\label {main}
A body of diameter $\delta$ on $S^d$ is complete if and only if it is of constant width~$\delta$.
\end{thm}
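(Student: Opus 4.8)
The two directions need quite different work. For the direction "complete $\Rightarrow$ constant width", the plan is to show that every supporting hemisphere $H$ of a complete body $K$ of diameter $\delta$ satisfies $\width_H(K)=\delta$. First I would note that no lune containing $K$ can have thickness exceeding $\delta$, since the thickness of a lune equals the distance between the centers of its two bounding $(d-1)$-hemispheres, and if $K$ sat in a lune of thickness $>\delta$ one should be able to produce two points of $K$, one near each bounding $(d-1)$-hemisphere, at distance $>\delta$ — this needs the standard spherical fact that a convex body touching both sides of a lune of thickness $t$ contains two points at distance $\geq t$ (or, more carefully, that $\width_H(K)\le\diam(K)$ always). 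So the real content is the reverse inequality $\width_H(K)\ge\delta$. Pick a supporting hemisphere $H$ and let $p\in\bd(K)\cap\bd(H)$. By Lemma \ref{p'} there is $p'\in K$ with $|pp'|=\delta$. Using $p$ and $p'$ I would build a lune $H\cap H'$ containing $K$ whose thickness is exactly $\delta$: take $H'$ to be the hemisphere with $p'$ on its boundary "antipodal-facing" $p$, arranged so that $H\cap H'$ is the thinnest lune through $H$ containing $K$, and verify with Lemma \ref{circle} (applied in the relevant $S^2$, with $c$ the far endpoint) that no point of $K$ can escape this lune. This forces $\width_H(K)\le\delta$, hence $=\delta$; since $H$ was an arbitrary supporting hemisphere, $K$ has constant width $\delta$.

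For the converse, "constant width $\delta$ $\Rightarrow$ complete", suppose $K$ has constant width $\delta$; clearly $\diam(K)\le\delta$, and I would first check $\diam(K)=\delta$ by choosing a supporting hemisphere $H$ realizing a diametral chord direction and reading $\delta=\width_H(K)\le\diam(K)$. Now suppose $K$ is not complete: then there is $x\notin K$ with $\diam(K\cup\{x\})=\delta$, i.e. $|xq|\le\delta$ for all $q\in K$. Since $x\notin K$ and $K$ is a convex body (Lemma \ref{intersection}), separate $x$ from $K$ by a supporting hemisphere: there is a hemisphere $H$ supporting $K$ with $x$ in the open complementary hemisphere, and in fact one can push $H$ so that $x$ lies strictly "beyond" $H/ $ relative to $K$. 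The plan is to derive a contradiction with $\width_H(K)=\delta$: because $x$ is beyond $H$ at positive distance and within distance $\delta$ of all of $K$, the thinnest lune $H\cap H'$ through $H$ containing $K$ would have thickness strictly less than $\delta$ — intuitively, the point $p\in\bd(K)\cap\bd(H)$ guaranteed a chord of length $\delta$ reaching the opposite side, but $x$ being outside and still $\delta$-close to everything squeezes the far wall of the lune strictly inward. Making this squeezing quantitative is where Lemma \ref{circle} and a compactness/$\varepsilon$ argument as in Lemma \ref{p'} come in.

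The main obstacle I expect is the quantitative geometric estimate in both directions: turning the qualitative statements "$p'$ exists at distance $\delta$" and "$x$ is outside but $\delta$-close" into the exact lune thickness $\delta$. Spherical lunes are less rigid than Euclidean slabs — the two bounding $(d-1)$-hemispheres meet along a $(d-2)$-sphere, and "distance to a wall of the lune" is not an affine function — so the clean Euclidean argument ("width $=$ distance between the two parallel supporting hyperplanes, and a complete set has a diametral segment normal to each") must be replaced by careful use of the piece-of-circle Lemma \ref{circle}: the curve $P_c(a,b)$ traced by points at distance $\delta$ from a fixed $c$ is exactly the tool that pins down the far wall of the lune. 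I would reduce everything to the relevant $S^2$ section (as the paper already signals it will for Lemma \ref{circle}), handle the $2$-dimensional picture explicitly, and then lift back via Lemma \ref{intersection}. A secondary technical point is ensuring the separating hemisphere in the converse can be chosen supporting $K$ with $x$ strictly outside the associated half-lune, which is a standard separation-plus-perturbation argument on $S^d$.
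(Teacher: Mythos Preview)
Your plan for ($\Rightarrow$) has its logic inverted. You correctly note that $\width_H(K)\le\delta$ always, so the content is $\width_H(K)\ge\delta$; but you then propose to \emph{construct} a lune $H\cap H'$ of thickness $\delta$ containing $K$, which would only re-establish $\width_H(K)\le\delta$. Declaring that this lune is also ``the thinnest through $H$'' is precisely what must be proved and cannot be arranged simply by placing $p'$ on $\bd(H')$. More importantly, Lemma~\ref{circle} points the other way: it asserts that a piece of circle lies \emph{in} $K$, not that $K$ lies in a lune. The paper argues by contradiction: take a lune $G\cap H\supset K$ of \emph{minimum} thickness and suppose that thickness is $<\delta$; Claim~2 of \cite{L1-AEQ} then guarantees that the centers $g,h$ of the two bounding $(d-1)$-hemispheres lie in $K$ (an arbitrary $p\in\bd(K)\cap\bd(H)$, as you take, would not suffice); Lemma~\ref{p'} supplies $g'\in K$ with $|gg'|=\delta$; passing to the $S^2$ through $g,h,g'$ and applying Lemma~\ref{circle} yields a piece of circle $P_c(h,g')\subset K\cap S^2$ which, after checking that $c$ does not lie on the great circle through $g$ and $h$, is seen to exit $K\cap S^2$ --- the desired contradiction. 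Your outline has the right ingredients but deploys Lemma~\ref{circle} in the wrong direction and omits the extremal-lune step that makes the centers available.

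For ($\Leftarrow$) the ``squeezing'' heuristic is not a proof: separating $x$ from $K$ by \emph{some} supporting hemisphere $H$, together with $|xq|\le\delta$ for all $q\in K$, does not force $\width_H(K)<\delta$, because the center $q$ of the far bounding hemisphere of the thinnest lune through $H$ has no controlled position relative to $x$. The paper's argument is direct and hinges on the \emph{nearest-point} choice: let $p$ be the unique nearest point of $K$ to $r\notin K$; by Theorem~3 of \cite{LaMu-AEQ} there is a lune of thickness $\delta$ containing $K$ with $p$ as the center of one bounding $(d-1)$-hemisphere, and by Claim~2 of \cite{L1-AEQ} the other center $q$ lies in $K$. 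Both $rp$ and $pq$ are orthogonal to that bounding great sphere at $p$, so $p$ lies strictly between $r$ and $q$ on a great circle, whence $|rq|>|pq|=\delta$. Lemma~\ref{circle} plays no role in this direction, so your plan to invoke it here is a red herring.
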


\begin{proof}
($\Rightarrow$)
Let us prove that if a body $K \subset S^d$ of diameter $\delta$ is complete, then $K$ is of constant width $\delta$.

Suppose the opposite, i.e., that $\width_I(K) \not = \delta$ for a hemisphere $I$ supporting $K$.
By Theorem 3 and Proposition 1 of \cite{L1-AEQ} $\width_I(K) \leq \delta$.
So $\Delta (K) < \delta$.
By lines 1-2 of p. 562 of \cite{L1-AEQ} the thickness of $K$ is equal to the minimum thickness of a lune containing $K$.
Take such a lune $L = G \cap H$, where $G, H$ are different and non-opposite hemispheres.
Denote by $g, h$ the centers of $G/H$ and $H/G$, respectively.
Of course, $|gh| < \delta$.
By Claim 2 of \cite{L1-AEQ} we have $g, h \in K$.
By Lemma \ref{p'} there exists a point $g' \in K$ in the distance $\delta$ from $g$.
Since the triangle $ghg'$ is non-degenerate, there is a unique two-dimensional sphere $S^2 \subset S^d$ containing $g, h, g'$.
Clearly, $ghg'$ is a subset of $M= K \cap S^2$.
Hence $M$ is a convex body on $S^2$.
Denote by $F$ this hemisphere of $S^2$ such that $hg' \subset \bd(F)$ and $g \in F$.
There is a unique $c \in F$ such that $|ch| = \delta = |cg'|$.
By Lemma \ref{circle} for $d=2$ we have $P_c(h,g') \subset M$.

We intend to show that $c$ is not on the great circle $E$ of $S^2$ through $g$ and $h$.
In order to see this, for a while suppose the opposite, i.e. that $c \in E$.
Then from $|g'g| = \delta$, $|g'c| = \delta$ and $|hc| = \delta$ we conclude  that $\angle gg'c = \angle hcg'$.
So the spherical triangle $g'gc$ is isosceles, which together with $|gg'|= \delta$ gives $|cg| = \delta$.
Since $|gh| = \Delta(L) = \Delta(K) >0$ and $g$ is a point of $ch$ different from $c$, we get a contradiction.
Hence, really, $c \not \in E$.

By the preceding paragraph $P_c(h,g')$ intersects $\bd(M)$ at a point $h'$ different from $h$ and $g'$.
So the non-empty set $P_c(h,g') \setminus \{h,h'\}$ is out of $M$.
This contradicts the result of the paragraph before the last.
Consequently, $K$ is a body of constant width $\delta$.

($\Leftarrow$) Let us prove that if $K$ is a spherical body of constant width $\delta$, then $K$ is a complete body of diameter $\delta$.
In order to prove this, it is sufficient to take any point $r \not \in K$ and to show that $\diam (K \cup \{r\}) > \delta$.

Take the largest ball $B_{\rho}(r)$ disjoint with the interior of $K$.
Since $K$ is convex, $B_{\rho}(r)$ has exactly one point $p$ in common with $K$.
By Theorem 3 of \cite{LaMu-AEQ} there exists a lune $L \supset K$
of thickness $\delta$ with $p$ as the center of one of the two $(d-1)$-dimensional hemispheres bounding this lune.
Denote by $q$ the center of the other $(d-1)$-dimensional hemisphere.
By Claim 2 of \cite{L1-AEQ} also $q \in K$.
Since $p$ and $q$ are the centers of the two $(d-1)$-dimensional hemispheres bounding $L$, we have $|pq|= \delta$.
From the fact that $rp$ and $pq$ are orthogonal to $\bd (H)$ at $p$,
we see that $p \in rq$.
Moreover, $p$ is not an endpoint of $rq$ and $|pq| = \delta$,
Hence $|rq| > \delta$.
Thus $\diam (K \cup \{r\}) > \delta$.
Since $r \not \in K$ is arbitrary, $K$ is complete.
\end{proof}

We say that a convex body $D \subset S^d$ is of {\it constant diameter} $\delta$ provided $\diam(D) = \delta$ and for every $p \in \bd(D)$ there is a point $p' \in \bd(D)$ with $|pp'| = \delta$ (see \cite {LaMu-AEQ}).
The following fact is analogous to the result in $E^d$ given by Reidemeister \cite {Re}.

\begin{thm}\label{constant diameter}
Bodies of constant diameter on $S^d$ coincide with complete bodies.
\end{thm}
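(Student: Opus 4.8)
The plan is to prove the two inclusions separately, using the equivalence already established in Theorem~\ref{main} between completeness and constant width~$\delta$.

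\textbf{Completeness implies constant diameter.}
First I would take a complete body $D$ of diameter $\delta$ and verify the defining properties of constant diameter. The equality $\diam(D)=\delta$ is immediate. For the second condition, let $p\in\bd(D)$. By Lemma~\ref{p'} there is a point $p'\in D$ with $|pp'|=\delta$. The only thing to check is that such a $p'$ actually lies on $\bd(D)$, not in the interior. This follows because $\diam(D)=\delta$: if $p'\in\inter(D)$, a small ball around $p'$ would lie in $D$, and moving slightly away from $p$ along the arc $pp'$ would produce a point of $D$ at distance exceeding $\delta$ from $p$, contradicting $\diam(D)=\delta$. Hence $p'\in\bd(D)$, and $D$ is of constant diameter $\delta$.

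\textbf{Constant diameter implies completeness.}
Conversely, let $D$ be a body of constant diameter $\delta$; I want to show $\diam(D\cup\{x\})>\delta$ for every $x\notin D$. By Theorem~\ref{main} it suffices to show $D$ is complete, or equivalently that $D$ has constant width $\delta$; but the cleaner route is to argue directly as in the $(\Leftarrow)$ part of Theorem~\ref{main}. Take $x\notin D$, let $B_\rho(x)$ be the largest ball centered at $x$ and disjoint from $\inter(D)$, and let $p$ be the unique common point of $B_\rho(x)$ with $D$; then $p\in\bd(D)$. Since $D$ has constant diameter, there is $p'\in\bd(D)$ with $|pp'|=\delta$. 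The subtle point is that one needs $p$ to lie on the arc $xp'$ (so that $|xp'|>|pp'|=\delta$). In the constant-width proof this came from a supporting lune of thickness $\delta$ through $p$ whose opposite center $q$ satisfied $|pq|=\delta$ and $p\in xq$; here one must instead argue that the diametral chord $pp'$ is orthogonal to the supporting great sphere of $D$ at $p$, which is in turn orthogonal to $xp$, forcing $p\in xp'$. I would establish this orthogonality using the standard fact that a chord realizing the diameter of a convex body is perpendicular at each endpoint to a supporting hyperplane (great sphere) there — a spherical analogue which can be quoted from \cite{LaMu-AEQ} or proved by a short first-variation argument. Given that, $|xp'|>\delta$ and $D\cup\{x\}$ has diameter exceeding $\delta$, so $D$ is complete.

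\textbf{Main obstacle.}
The routine direction is ``complete $\Rightarrow$ constant diameter'', which is essentially Lemma~\ref{p'} plus a remark. The delicate direction is the converse: the crux is showing that the largest ball $B_\rho(x)$ touches $D$ at a point $p$ from which the diametral arc $pp'$ passes \emph{through} $x$ (equivalently, that $x$, $p$, $p'$ are collinear on a great circle with $p$ between). This is exactly the orthogonality/antipodality argument, and it is where one must be careful that spherical convexity (no antipodal pairs, diameter $<\tfrac\pi2$) makes the Euclidean reasoning of Reidemeister go through. Once that geometric lemma is in hand the theorem follows immediately.
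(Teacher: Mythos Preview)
Your first direction (complete $\Rightarrow$ constant diameter) is correct and in fact more economical than the paper's: the paper detours through Theorem~\ref{main} and a supporting lune, whereas you invoke Lemma~\ref{p'} directly and observe that a diametral partner must lie on the boundary. Nothing is missing there.

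The gap is in the converse. You correctly set up the nearest point $p\in\bd(D)$ to $x$, take the hemisphere $J$ tangent to $B_\rho(x)$ at $p$ with $D\subset J$, and pick $p'\in\bd(D)$ with $|pp'|=\delta$. But your key claim that $p$ lies on the arc $xp'$ rests on asserting that $pp'$ is orthogonal to $\bd(J)$ at $p$. The ``standard fact'' you quote only says that a diametral chord $pp'$ is orthogonal to \emph{some} supporting great sphere at $p$ (namely the boundary of $B_\delta(p')$); it does not force that supporting great sphere to be $\bd(J)$. At a non-smooth boundary point $p$ there are many supporting hemispheres, and the one determined by the ball $B_\rho(x)$ need not be the one orthogonal to $pp'$. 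So the collinearity $p\in xp'$ is not available in general, and the argument as written does not close.

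The paper avoids this by asking for less: since $p'\in D\subset J$ while $r\notin J$ and the arc $rp$ is orthogonal to $\bd(J)$ at $p$, one only gets the inequality $\angle rpp'\ge \tfrac{\pi}{2}$. That is already enough: by the spherical law of cosines (or the spherical Pythagorean theorem in the right-angle case),
\[
\cos|rp'| = \cos|rp|\cos|pp'| + \sin|rp|\sin|pp'|\cos(\angle rpp') \le \cos|rp|\cos|pp'| < \cos|pp'|,
\]
so $|rp'|>|pp'|=\delta$. Replacing your collinearity step with this angle--cosine estimate repairs the proof; everything else in your outline is sound.
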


\begin{proof}
Take a complete body $D \subset S^d$ of diameter $\delta$.
Let $g \in \bd (D)$ and $G$ be a hemisphere supporting $D$ at $g$.
By Theorem \ref{main} the body $D$ is of constant width $\delta$.
So $\width_G(D) = \delta$ and there exists a hemisphere $H$ such that the lune $G \cap H \supset D$ has thickness $\delta$.
By Claim 2 of \cite{L1-AEQ} the centers $h$ of $H/G$ and $g$ of $G/H$ belong to $D$.
So $|gh| =\delta$.
Thus $D$ is of constant diameter $\delta$.

Consider a body $D \subset S^d$ of constant diameter $\delta$.
Let $r \not \in D$.
Take the largest $B_{\rho}(r)$ whose interior is disjoint with $D$.
Denote by $p$ the common point of $B_{\rho}(r)$ and $D$.
A unique hemisphere $J$ supports $B_{\rho}(r)$ at $p$.
Observe that $D \subset J$ (if not, there is a point $v \in D$ out of $J$; clearly $vp$ passes through $\inter B_{\rho}(r)$, a contradiction).
Since $D$ is of constant diameter $\delta$, there is $p' \in D$ with $|pp'| =\delta$.
Observe that $\angle rpp' \geq \frac{\pi}{2}$.
If it is $\frac{\pi}{2}$, then $|rp'| > \delta$.
If it is larger than $\frac{\pi}{2}$, the triangle $rpp'$ is obtuse and then by the law of cosines $|rp'| > |pp'|$ and hence
$|rp'| > \delta$.
By $|rp'| > \delta$ in both cases we see that $D$ is complete.
\end{proof}

By Theorem \ref{main}, in Theorem \ref{constant diameter} we may exchange ``complete" to ``constant width". 
This form is proved earlier as follows.
Any body of constant width $\delta$ is of constant diameter $\delta$ and the inverse is shown for $\delta \geq \frac{\pi}{2}$, and for $\delta < \frac{\pi}{2}$ if $d=2$ (see \cite{LaMu-AEQ}).
By \cite{HanWu} this inverse holds for any $\delta$.
Our short proof of Theorem \ref{constant diameter} is 
quite different from these in \cite{LaMu-AEQ}, \cite{L2-AEQ} and~\cite{HanWu}.

\vskip0.2cm
\baselineskip=12pt

Marek Lassak

University of Technology and Life Sciences 

al. Kaliskiego 7, Bydgoszcz 85-796 Bydgoszcz, Poland

\vskip0.1cm
e-mail: lassak@utp.edu.pl

\end{document}